\newtheorem{thm}{Theorem}[section]
\newtheorem{cor}[thm]{Corollary}
\newtheorem{prob}[thm]{Problem}
\theoremstyle{definition}
\numberwithin{equation}{section}
\newcommand{\cQ}{{\mathcal Q}}
\newcommand{\cR}{{\mathcal R}}
\newcommand{\R}{{\mathbb R}}
\newcommand{\kM}{{\mathfrak M}}
\def\al{\alpha}
\def\dl{\delta}
\def\0{\emptyset}
\def\1{\textbf{\rm 1}}
\def\6{\partial}
\def\8{\infty}
\def\lt{\left}
\def\rt{\right}
\def\wt{\widetilde}
\DeclareMathOperator*{\essinf}{\,\text{ess\,inf}\,}
\begin{document}

\title
[The Fefferman-Stein type inequality]
{The Fefferman-Stein type inequality for strong maximal operator in the heigher dimensions}

\author[H.~Tanaka]{Hitoshi~Tanaka}
\address{
Research and Support Center on Higher Education for the hearing and Visually Impaired, 
National University Corporation Tsukuba University of Technology,
Kasuga 4-12-7, Tsukuba City, Ibaraki, 305-8521 Japan
}
\email{htanaka@k.tsukuba-tech.ac.jp}

\thanks{
The author is supported by 
Grant-in-Aid for Scientific Research (C) (15K04918), 
the Japan Society for the Promotion of Science. 
}

\subjclass[2010]{Primary 42B25; Secondary 42B35.}

\keywords{
endpoint estimate;
Fefferman-Stein type inequality;
strong maximal operator.
}

\date{}

\begin{abstract}
The Fefferman-Stein type inequality 
for strong maximal operator is verified 
with compositions of some maximal operators in the heigher dimensions. 
An elementary proof of the endpoint estimate for the strong maximal operator is also given. 
\end{abstract}

\maketitle

\section{Introduction}\label{sec1}
The purpose of this paper is 
to develop a theory of weights for strong maximal operator in the heigher dimensions 
and to present an elementary proof of the endpoint estimate for the strong maximal operator. 
We first fix some notations. 
By weights we will always mean non-negative and locally integrable functions on $\R^d$. 
Given a measurable set $E\subset\R^d$ and a weight $w$, 
$w(E)=\int_{E}w\,dx$, 
$|E|$ denotes the Lebesgue measure of $E$ and 
$\1_{E}$ denotes the characteristic function of $E$. 
Let $0<p\le\8$ and $w$ be a weight. 
We define the weighted Lebesgue space 
$L^p(\R^d,w)$ 
to be a Banach space equipped with the norm (or quasi norm) 
\[
\|f\|_{L^p(\R^d,w)}
=
\lt(\int_{\R^d}|f|^pw\,dx\rt)^{\frac1p}.
\]
For a locally integrable function $f$ on $\R^d$, 
we define the Hardy-Littlewood maximal operator $\kM$ by
\[
\kM f(x)
=
\sup_{Q\in\cQ}
\1_{Q}(x)\fint_{Q}|f|\,dy,
\]
where $\cQ$ is the set of all cubes in $\R^d$ 
with sides parallel to the coordinate axes and
the barred integral $\fint_{Q}f\,dy$ 
stands for the usual integral average of $f$ over $Q$. 
For a locally integrable function $f$ on $\R^d$, 
we define the strong maximal operator $\kM_d$ by
\[
\kM_df(x)
=
\sup_{R\in\cR_d}
\1_{R}(x)\fint_{R}|f|\,dy,
\]
where $\cR_d$ is the set of all rectangles in $\R^d$ 
with sides parallel to the coordinate axes. 

It is well known that 
\[
w(\{x\in\R^d:\,\kM f(x)>t\})
\le
\frac{C}{t}
\|f\|_{L^1(\R^d,\kM w)},
\quad t>0,
\]
holds for arbitrary weight $w$ and, 
by interpolation, that 
\[
\|\kM f\|_{L^p(\R^d,w)}
\le C
\|f\|_{L^p(\R^d,\kM w)},
\quad p>1,
\]
holds for arbitrary weight $w$. 
These are called the Fefferman-Stein inequality 
(see \cite{FS}). 

There is a problem in the book \cite[p472]{GR}:

\begin{prob}\label{prob1.1}
Does the analogue of the Fefferman-Stein inequality 
hold for the strong maximal operator, i.e. 
\begin{equation}\label{1.1}
\|\kM_df\|_{L^p(\R^d,w)}
\le C
\|f\|_{L^p(\R^d,\kM_dw)},
\quad p>1,
\end{equation}
for arbitrary $w\ge 0$?
\end{prob}

\noindent
Concerning Problem \eqref{prob1.1} 
it is known that, 
see \cite{Lin} (for $d=2$) and 
\cite{Pe} (for $d\ge 2$), 
\eqref{1.1} holds for all $p>1$ 
if $w\in A_{\8}^{*}$. 

We say that $w$ belongs to 
the class $A_p^{*}$ whenever
\begin{align*}
[w]_{A_p^{*}}
&=
\sup_{R\in\cR}
\fint_{R}w(x)\,dx\lt(\fint_{R}w(x)^{-1/(p-1)}\,dx\rt)^{p-1}
<\8,
\quad 1<p<\8,
\\
[w]_{A_1^{*}}
&=
\sup_{R\in\cR}
\frac{\fint_{R}w(x)\,dx}{\essinf_{x\in R}w(x)}
<\8.
\end{align*}
It follows by H\"{o}lder's inequality that 
the $A_p^{*}$ classes are increasing, 
that is, 
for $1\le p\le q<\8$ we have 
$A_p^{*}\subset A_q^{*}$. 
Thus one defines 
\[
A_{\8}^{*}
=
\bigcup_{p>1}A_p^{*}.
\]

The endpoint behavior of $\kM_d$ close to $L^1$ is given by 
Mitsis \cite{Mi} (for $d=2$) and 
Luque and Parissis \cite{LP} (for $d\ge 2$). 
That is, for $t>0$,
\[
w(\{x\in\R^d:\,\kM_df(x)>t\})
\le C
\int_{\R^d}
\frac{|f|}{t}
\lt(1+\lt(\log^{+}\frac{|f|}{t}\rt)^{d-1}\rt)
\kM_dw\,dx,
\]
holds for any $w\in A_{\8}^{*}$, 
where $\log^{+}t=\max(0,\log t)$. 

Concerning Problem \eqref{prob1.1} 
we established the following.

\begin{thm}[\cite{ST}]\label{thm1.2}
Let $w$ be any weight on $\R^2$ and 
set $W=\kM_2\kM w$. 
Then 
\[
w(\{x\in\R^2:\,\kM_2f(x)>t\})
\le C
\int_{\R^2}
\frac{|f|}{t}
\lt(1+\log^{+}\frac{|f|}{t}\rt)
W\,dx,\quad t>0,
\]
holds, where the constant $C>0$ does not depend on $w$ and $f$.
\end{thm}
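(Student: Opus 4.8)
\section*{Proof proposal}

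The plan is to reduce the whole estimate to one-dimensional maximal operators plus a single pointwise comparison that manufactures the composition $\kM_2\kM w$. Write $M_1$ and $M_2$ for the one-dimensional Hardy-Littlewood maximal operators acting in the $x_1$- and $x_2$-variable respectively, and $M$ for a generic one-dimensional maximal operator on $\R$. By homogeneity we may normalize $t=1$ (replace $f$ by $f/t$), and we may assume $f\ge 0$. Fubini's theorem together with the trivial bound $\fint_{I_2}f(\tau,\cdot)\le M_2f(\tau,x_2)$ gives the pointwise domination $\kM_2f(x)\le M_1M_2f(x)$, so it suffices to estimate $w(\{M_1M_2f>1\})$. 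The scheme is: (A) peel off the outer operator $M_1$ by the one-dimensional Fefferman-Stein weak-type inequality, after a truncation that restores integrability; (B) treat the resulting level-set integral of $M_2f$ by a one-dimensional weighted $L\log L$ estimate, which is exactly where the single logarithm is created; and (C) absorb the two iterated one-dimensional maximal functions of $w$ into $W=\kM_2\kM w$.

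For step (A) I would fix $x_2$ and use the truncation $\{M_1g>1\}\subset\{M_1(g\1_{\{g>1/2\}})>1/2\}$ with $g=M_2f$ (valid since $M_1(g\1_{\{g\le 1/2\}})\le 1/2$), followed by the one-dimensional Fefferman-Stein weak-type inequality in $x_1$ with the weight $w(\cdot,x_2)$. Integrating in $x_2$ yields
\[
w(\{M_1M_2f>1\})
\le C\int_{\{M_2f>1/2\}}M_2f\cdot M_1w\,dx.
\]
For step (B) I isolate the one-dimensional statement: for $M$ on $\R$ and any weight $v$,
\[
\int_{\{Mg>1/2\}}Mg\cdot v\,dx
\le C\int_\R|g|\lt(1+\log^+|g|\rt)Mv\,dx.
\]
This I would prove by writing $\int_{\{Mg>1/2\}}Mg\,v=\int_{1/2}^\8 v(\{Mg>\lm\})\,d\lm+\tfrac12 v(\{Mg>1/2\})$, inserting the truncated Fefferman-Stein bound $v(\{Mg>\lm\})\le\frac{C}{\lm}\int_{\{|g|>\lm/4\}}|g|\,Mv$, and integrating in $\lm$; the resulting factor $\int_{1/2}^{4|g|}\frac{d\lm}{\lm}$ is precisely $1+\log^+|g|$ up to a constant. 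Applying this in the $x_2$-variable with $g=f(x_1,\cdot)$ and $v=M_1w(x_1,\cdot)$, so that $Mg=M_2f(x_1,\cdot)$ and $Mv=M_2M_1w(x_1,\cdot)$, and integrating in $x_1$, the previous display becomes
\[
w(\{M_1M_2f>1\})
\le C\int_{\R^2}|f|\lt(1+\log^+|f|\rt)M_2M_1w\,dx.
\]

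The main obstacle is step (C), the pointwise inequality
\[
M_2M_1w(x)\le C\,\kM_2\kM w(x)=C\,W(x),
\]
which is what forces the composition $\kM_2\kM w$, rather than $\kM_2w$, into the statement. The difficulty is genuine: a twofold iteration of one-dimensional maximal operators is in general strictly larger than the strong maximal operator, the discrepancy being a logarithmic factor coming from intervals living at many different scales. I would prove it by fixing a near-extremal vertical interval $J\ni x_2$ for the outer operator and, for each height $s\in J$, a near-extremal horizontal interval $I_s\ni x_1$ for $M_1w(x_1,s)$; decomposing $J$ according to the dyadic size of $|I_s|$, one compares, scale by scale, the horizontal average $\fint_{I_s}w(\cdot,s)$ with the value of the cube maximal function $\kM w$ at nearby points, and then recombines the scales inside a single rectangle through $x$, which realizes $\kM_2$. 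The point is that the extra cube-averaging built into $\kM w$ supplies exactly the room needed to undo the scale mismatch, so that no logarithm survives on the weight side; this is the two-dimensional ($d-1=1$) manifestation of the single logarithm that already appears on the $f$-side. Combining (A)--(C) then gives the asserted bound with a constant independent of $w$ and $f$.
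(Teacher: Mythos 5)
Your steps (A) and (B) are correct: the truncation plus the one--dimensional Fefferman--Stein weak--type inequality in $x_1$, followed by your one--dimensional $L\log L$ lemma in $x_2$, do yield
$w(\{\kM_2 f>t\})\le C\int_{\R^2}\frac{|f|}{t}\bigl(1+\log^{+}\frac{|f|}{t}\bigr)M_2M_1w\,dx$,
where $M_1,M_2$ denote the one--dimensional maximal operators in $x_1,x_2$. The fatal problem is step (C): the pointwise inequality $M_2M_1w\le C\,\kM_2\kM w$ is not just the "main obstacle", it is \emph{false}, so the scheme cannot be completed. Counterexample: for large $n$ set $\epsilon=\frac{1}{8n}$, let $Q_k$ be the square of side $\epsilon$ centered at $p_k=(4^k,\tfrac12+\tfrac{k}{4n})$, $k=1,\dots,n$, and put $w=\sum_{k=1}^{n}4^k\epsilon^{-2}\1_{Q_k}$; evaluate at $x=(0,0)$. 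A horizontal line meeting $Q_k$ carries one--dimensional mass $4^k/\epsilon$ at distance about $4^k$ from the $x_2$-axis, so $M_1w(0,u)\ge(2\epsilon)^{-1}$ for a set of heights $u$ of measure $\epsilon$ for each $k$, whence $M_2M_1w(0,0)\ge\fint_0^1M_1w(0,u)\,du\ge n/2$: each blob contributes (mass)/(distance) $=1$, and the iterated operator \emph{sums} these contributions because the inner interval may change scale with the height $u$. On the other hand, $\kM w\le C\sum_k 4^k\max(\epsilon,|z-p_k|)^{-2}$, and every rectangle $R\ni(0,0)$ of width $A$ and height $B$ satisfies $\fint_R\kM w\le C(1+\log n)$: if $R$ stays below height $\tfrac14$ it sees only the tails $4^k|z-p_k|^{-2}$, whose integrals along each horizontal line are $\le C4^k$, and $\sum_{4^k\le 2A}4^k\le CA$ makes the average $O(1)$; if instead $B\ge\tfrac14$, the blobs inside $R$ contribute at most $\sum_{4^k\le 2A}4^k\bigl(1+\log\frac{\min(A,B)}{\epsilon}\bigr)\le CA(1+\log n)$ while $R$ pays area $AB\ge A/4$. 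Hence $M_2M_1w(0,0)/\kM_2\kM w(0,0)\gtrsim n/\log n\to\8$; the cube-averaging inside $\kM w$ does \emph{not} supply the room to undo the scale mismatch, and your intermediate estimate with weight $M_2M_1w$ is strictly weaker than Theorem \ref{thm1.2}.

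This failure also explains why the proof that Theorem \ref{thm1.2} refers to (in \cite{ST}, by the same method that gives Theorems \ref{thm1.3} and \ref{thm3.1} here) never passes through iterated one--dimensional operators: it is a C\'ordoba--Fefferman/Fefferman--Pipher covering argument. One takes the rectangles $R_i$ with $\fint_{R_i}|f|>t$, orders them by decreasing longest side, selects a sparse subfamily $\{\wt{R}_i\}$ (sparseness measured by Lebesgue measure as in \eqref{2.2}, or by the exponential condition as in \eqref{3.2}), and shows that $\bigcup_iR_i$ lies in a level set of $\kM$ applied to a function of $\1_{\bigcup_i\wt{R}_i}$. The Fefferman--Stein inequality for $\kM$ then produces the inner factor $\kM w$, and the outer $\kM_2$ appears when the average of $\kM w$ over each selected rectangle is replaced by the pointwise value of $\kM_2\kM w$ inside that rectangle, the sparseness making the resulting sum summable. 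In other words, the composition $\kM_2\kM w$ comes from the geometry of the selected rectangles, not from any pointwise domination between maximal operators; if you want to repair your argument, this covering route, rather than a pointwise comparison like (C), is what you need.
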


In this paper 
we consider a weaker results of Theorem \ref{thm1.2} 
in the heigher dimensions (Theorem \ref{thm1.3}). 

Let $c=1,2,\ldots,d$. 
We say that the set of rectangles $R$ in $\R^d$ have the complexity $c$ 
whenever the sidelengths of $R$ are exactly 
$\al_1$ or $\al_2$ or $\ldots$ or $\al_c$ 
for varying $\al_1,\al_2,\ldots,\al_c>0$. 
That is, the set of rectangles with complexity $c$ 
is the $c$-parameter family of rectangles. 
For a locally integrable function $f$ on $\R^d$, 
we define the strong maximal operator $\kM_c$ by 
\[
\kM_cf(x)
=
\sup_{R\in\cR_c}
\1_{R}(x)\fint_{R}|f|\,dy,
\]
where $\cR_c$ is the set of all rectangles in $\R^d$ 
with sides parallel to the coordinate axes and 
having the complexity $c$. 
We notice that $\kM_1$ is the Hardy-Littlewood maximal operator $\kM$. 

\begin{thm}\label{thm1.3}
Let $c=1,2,\ldots,d$. 
Let $w$ be any weight on $\R^d$ and 
set $W=\kM_c\kM_{c-1}\cdots\kM_1w$. 
Then, for $p>1$, 
\[
w(\{x\in\R^d:\,\kM_cf(x)>t\})^{\frac1p}
\le
\frac{C}{t}
\|f\|_{L^p(\R^d,W)},
\quad t>0,
\]
holds, where the constant $C>0$ does not depend on $w$ and $f$.
\end{thm}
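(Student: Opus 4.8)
The plan is to prove the stronger strong-type bound
\[
\int_{\R^d}(\kM_cf)^pw\,dx\le C\int_{\R^d}|f|^pW\,dx,
\]
from which the asserted weak-type inequality follows at once by Chebyshev's inequality, since $t^pw(\{\kM_cf>t\})\le\int(\kM_cf)^pw$. Because $p>1$ throughout, every maximal operator that enters is bounded on the relevant $L^p$, so no logarithmic (endpoint) losses appear and the whole argument stays log-free; the delicate endpoint phenomena that force the factor $1+(\log^{+})^{d-1}$ in the Luque--Parissis and Mitsis estimates are precisely what the composition weight $W$ is designed to circumvent. The case $c=1$ is nothing but the strong-type Fefferman--Stein inequality for $\kM=\kM_1$ recalled in the introduction, since there $W=\kM_1w=\kM w$.

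The key is to decompose $\kM_c$ into cube-type pieces. Given a rectangle $R\in\cR_c$, group its coordinate directions according to their common sidelength into $m\le c$ blocks, so that $R=Q_1\times\cdots\times Q_m$ with each $Q_j$ a cube in the coordinates of the $j$-th block. The average then factorizes, $\fint_R|f|=\fint_{Q_1}\cdots\fint_{Q_m}|f|$, whence pointwise
\[
\kM_cf\le\sum\kM^{(1)}\cdots\kM^{(m)}f,
\]
the sum running over the finitely many (depending only on $d$) ways of partitioning the $d$ directions into at most $c$ nonempty blocks, and $\kM^{(j)}$ denoting the Hardy--Littlewood maximal operator acting in the coordinates of the $j$-th block. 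Each such partial operator satisfies the Fefferman--Stein inequality: one applies the classical one in the block's coordinate subspace with the remaining variables frozen and then integrates, using Fubini. Peeling the factors from the innermost outward therefore gives, for each partition,
\[
\int(\kM^{(1)}\cdots\kM^{(m)}f)^pw\le C\int|f|^p\,\kM^{(m)}\cdots\kM^{(1)}w.
\]

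It remains to dominate the composite weight by $W$, and this is the step I expect to be the main obstacle. One must show $\kM^{(m)}\cdots\kM^{(1)}w\le C\,\kM_c\kM_{c-1}\cdots\kM_1w$ for every admissible partition, that is, that an iterated composition of partial cube-maximal operators taken over a chain of disjoint coordinate blocks is controlled by the graded composition in which the $j$-th factor is allowed complexity $j$. This is where the anisotropy of $\cR_c$ genuinely has to be confronted: a single partial cube-maximal, realized as a full-dimensional operator, already costs one extra parameter, so the content of the inequality is that stacking $m\le c$ of them over nested blocks never exceeds the complexity budget $c,c-1,\dots,1$ recorded by $W$; I would try to verify it by induction on $m$, at each stage bounding one more partial cube-average by the next available full-complexity factor $\kM_j$ and invoking monotonicity of $\kM_j$ in $j$. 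Once this domination is in hand, summing over the finitely many partitions absorbs the combinatorial count into the constant $C$, yielding the strong-type bound, and Chebyshev's inequality converts it into the stated weak-type estimate.
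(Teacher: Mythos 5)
Your first two steps are sound: the pointwise domination of $\kM_c$ by finitely many compositions of block-wise Hardy--Littlewood operators, and the iterated (slice-plus-Fubini) Fefferman--Stein inequality for each such composition, are both correct, and proving the strong-type bound would indeed imply the stated weak-type one. But the whole proof then rests on the domination $\kM^{(m)}\cdots\kM^{(1)}w\le C\,\kM_c\kM_{c-1}\cdots\kM_1w$, which you leave unproven, and the strategy you sketch for it cannot work, because a partial (block-wise) maximal operator is \emph{not} pointwise controlled by the full-dimensional operator of the corresponding complexity. Already in $d=2$ with blocks $\{x_1\}$ and $\{x_2\}$: taking $w=\1_{[0,1]\times[0,\eps]}$ and the point $(a,\eps/2)$ with $a$ large, one has $\kM^{(1)}w\approx 1/a$ while $\kM_1w=\kM w\approx\eps/a^2$, so $\kM^{(1)}w\le C\kM_1w$ fails (the ratio $a/\eps$ is unbounded); thus the very first step of your proposed induction, matching a partial cube-average with the factor $\kM_1$, is false. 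The alternative of bounding each partial operator by $\kM_2$ (legitimate, via thin rectangles and Lebesgue differentiation) only yields $\kM^{(2)}\kM^{(1)}w\le\kM_2\kM_2w$, and this is genuinely larger than the target weight: for $w=\1_{[0,1]^2}$ one computes $\kM_2\kM_2w(R,R)\approx(\log R)^2/R^2$ whereas $\kM_2\kM_1w(R,R)\approx(\log R)/R^2$, so $\kM_2\kM_2w\le C\kM_2\kM_1w$ also fails. Whether the composite domination $\kM^{(2)}\kM^{(1)}w\le C\kM_2\kM_1w$ itself is true is a delicate multi-scale question (the obvious single-rectangle tests lose a factor equal to the number of scales, which must be recovered from logarithmic gains in $\int\kM w$); your proposal gives no argument for it, and it looks at least as hard as the theorem you are trying to prove.

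The paper avoids pointwise comparison of weights altogether and argues by induction on the complexity $c$ with a C\'ordoba--Fefferman type selection. From the rectangles $R_i\in\cR_m$ with $\fint_{R_i}|f|\,dy>t$ it extracts a sparse subfamily $\{\wt{R}_i\}$ satisfying \eqref{2.2}, and proves the covering property \eqref{2.3}: $\bigcup_iR_i\subset\{\kM_{m-1}[\1_{\bigcup_i\wt{R}_i}]\ge\frac12\}$, which is exactly where the complexity drops from $m$ to $m-1$. The inductive weak-type hypothesis (applied to the indicator function, with weight $U=\kM_{m-1}\cdots\kM_1w$) then bounds $w(\bigcup_iR_i)$ by $C\sum_iU(\wt{R}_i)$; the outermost factor $\kM_m$ of $W$ enters only through the elementary estimate $\lt(\fint_{\wt{R}_i}U\,dy\rt)^{1/p}\le\inf_{\wt{R}_i}W^{1/p}$, and the proof closes using the sparseness \eqref{2.4} together with the unweighted $L^p$-boundedness of $\kM_m$ applied to $fW^{1/p}$. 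Note also that this covering argument naturally produces the weak-type statement (it estimates $w(\bigcup_iR_i)$ directly), with the strong type recovered afterwards by interpolation (Corollary \ref{cor1.4}) --- the opposite order of implication from the one you propose.
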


By interpolation, we have the following corollary.

\begin{cor}\label{cor1.4}
Let $c=1,2,\ldots,d$. 
Let $w$ be any weight on $\R^d$ and 
set $W=\kM_c\kM_{c-1}\cdots\kM_1w$. 
Then, for $p>1$, 
\[
\|\kM_cf\|_{L^p(\R^d,w)}
\le C
\|f\|_{L^p(\R^d,W)}
\]
holds, where the constant $C>0$ does not depend on $w$ and $f$.
\end{cor}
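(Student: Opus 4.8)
The plan is to obtain the strong-type estimate of Corollary \ref{cor1.4} from the weak-type estimate of Theorem \ref{thm1.3} by the Marcinkiewicz interpolation theorem. The structural point that makes this immediate is that the majorant weight $W=\kM_c\kM_{c-1}\cdots\kM_1w$ depends only on $w$ and not on the exponent $p$. Thus, for a fixed weight $w$, Theorem \ref{thm1.3} supplies a weak-type estimate at \emph{every} exponent $p>1$ with one and the same weight $W$ on the domain side, which is exactly the data an interpolation argument needs.

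First I would fix $p>1$ and choose auxiliary exponents $p_0,p_1$ with $1<p_0<p<p_1<\8$. Raising the conclusion of Theorem \ref{thm1.3} to the power $p_i$ shows, for $i=0,1$ and all $t>0$,
\[
w(\{x\in\R^d:\,\kM_cf(x)>t\})
\le
\lt(\frac{C_i\,\|f\|_{L^{p_i}(\R^d,W)}}{t}\rt)^{p_i},
\]
with $C_0,C_1$ independent of $w$ and $f$. Read through the lens of the Marcinkiewicz theorem, these say that $\kM_c$ is of weak types $(p_0,p_0)$ and $(p_1,p_1)$ when viewed as a map from the space $L^{p_i}(\R^d,W)$, carrying the domain measure $W\,dx$, into $L^{p_i,\8}(\R^d,w)$, carrying the range measure $w\,dx$. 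Since $\kM_c$ is sublinear---one has $\kM_c(f+g)\le\kM_cf+\kM_cg$ pointwise and $\kM_c(\lm f)=|\lm|\,\kM_cf$---the Marcinkiewicz interpolation theorem for operators between two measure spaces yields, at the intermediate exponent $p$,
\[
\|\kM_cf\|_{L^p(\R^d,w)}
\le C\,\|f\|_{L^p(\R^d,W)},
\]
where $C$ depends only on $p,p_0,p_1,C_0,C_1$ and hence not on $w$ or $f$. This is precisely the claim.

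This deduction is essentially formal; the only point requiring attention is the bookkeeping of the two distinct underlying measures, so that one must invoke the version of the Marcinkiewicz theorem in which the source and target measure spaces differ. Here the range measure $w\,dx$ is $\sigma$-finite because $w$ is a weight, while on the domain side we may assume $\|f\|_{L^p(\R^d,W)}<\8$ (the inequality being trivial otherwise); this forces $f=0$ a.e.\ on $\{W=\8\}$, so that after restricting to the $\sigma$-finite measure space $(\{W<\8\},\,W\,dx)$ the theorem applies verbatim. No genuine obstacle arises: the full substance of the result sits in the weak-type family provided by Theorem \ref{thm1.3}.
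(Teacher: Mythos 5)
Your proposal is correct and follows exactly the route the paper intends: the paper deduces Corollary \ref{cor1.4} from Theorem \ref{thm1.3} "by interpolation," and your argument is precisely that deduction, exploiting the fact that the weight $W=\kM_c\kM_{c-1}\cdots\kM_1w$ is independent of $p$ so that Marcinkiewicz interpolation between weak-type $(p_0,p_0)$ and $(p_1,p_1)$ estimates (from $L^{p_i}(\R^d,W)$ to $L^{p_i,\8}(\R^d,w)$) applies. Your attention to the two underlying measures and the $\sigma$-finiteness point is a careful touch, but the substance matches the paper's proof.
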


Probably, 
the endpoint Fefferman-Stein inequality 
for the strong maximal operator 
with compositions of some maximal operators 
hold in the heigher dimensions, 
but, 
we can not prove it until now. 
Further refinement of the known proofs for the boundedness of the strong maximal operator would be needed. 
In the last section, 
we will present an elementary proof of the endpoint estimate for the strong maximal operator 
(Theorem \ref{thm3.1}). 
Our method used is a covering lemma for rectangles due to 
Robert Fefferman and Jill Pipher \cite{FP}. 

The letter $C$ will be used for the positive finite constants that may change from one occurrence to another. 
Constants with subscripts, such as $C_1$, $C_2$, do not change
in different occurrences.

\section{Proof of Theorem \ref{thm1.3}}\label{sec2}
In what follows we shall prove Theorem \ref{thm1.3}. 
We denote by $P_i$, 
$i=1,2,\ldots,d$, 
the projection on the $x_i$-axis. 
First, we notice that the theorem holds for $c=1$. 
We assume that the theorem holds for $c=m-1$ 
and then we shall prove it for $c=m$. 
With a standard argument, 
we may assume that 
the basis $\cR_m$ is the set of all dyadic rectangles 
(cartesian products of dyadic intervals). 
By allowing a multiple constant $d!$, 
we further assume that, 
when $R\in\cR_m$, 
the sidelengths $|P_i(R)|$ 
decrease and 
\[
|P_1(R)|
=
|P_2(R)|
=\cdots=
|P_{\hat{m}}(R)|
>
|P_{\hat{m}+1}(R)|.
\]
Fix $t>0$ and 
given the finite collection of dyadic rectangles 
$\{R_i\}_{i=1}^{M}\subset\cR_m$ 
such that 
\begin{equation}\label{2.1}
\fint_{R_i}|f|\,dy>t,\quad
i=1,2,\ldots,M.
\end{equation}
It suffices to estimate 
$w\lt(\bigcup_{i=1}^{M}R_i\rt)$.

First relabel if necessary so that 
the $R_i$ are ordered so that 
their long sidelengths $|P_1(R_i)|$ decrease. 
We now give a selection procedure to find subcollection 
$\{\wt{R}_i\}_{i=1}^{N}
\subset
\{R_i\}_{i=1}^{M}$. 

Take $\wt{R}_1=R_1$. 
Suppose have now chosen the rectangles 
$\wt{R}_1,\wt{R}_2,\ldots,\wt{R}_{i-1}$. 
We select $\wt{R}_i$ to be the first rectangle $R_k$ 
occurring after $\wt{R}_{i-1}$ 
so that 
\[
\lt|\bigcup_{j=1}^{i-1}\wt{R}_j\cap R_k\rt|<\frac12|R_k|.
\]
Thus, we see that 
\begin{equation}\label{2.2}
\lt|\bigcup_{j=1}^{i-1}\wt{R}_j\cap\wt{R}_i\rt|<\frac12|\wt{R}_i|,
\quad i=2,3,\ldots,N.
\end{equation}

We claim that 
\begin{equation}\label{2.3}
\bigcup_{i=1}^{M}R_i
\subset
\lt\{
x\in\R^d:\,
\kM_{m-1}[\1_{\bigcup_{i=1}^{N}\wt{R}_i}](x)\ge\frac12
\rt\}.
\end{equation}
Indeed, 
choose any point $x$ inside a rectangle $R_j$ 
that is not one of the selected rectangles $\wt{R}_i$. 
Then, 
there exists a unique $J\le N$ such that 
\[
\lt|\bigcup_{i=1}^{J}\wt{R}_i\cap R_j\rt|\ge\frac12|R_j|.
\]
Since, 
$|P_l(\wt{R}_i)|\ge|P_l(R_j)|$ 
for $l=1,2,\ldots,\hat{m}$ 
and $i=1,2,\ldots,J$, 
we have that 
\[
P_l(\wt{R}_i)\cap P_l(R_j)
=
P_l(R_j)
\text{ when }
\wt{R}_i\cap R_j\ne\0.
\]
Thus, 
\begin{align*}
\bigcup_{i=1}^{J}\wt{R}_i\cap R_j
&=
\bigcup_{i=1}^{J}
\lt(\prod_{l=1}^{\hat{m}}P_l(R_j)\rt)
\times
\lt(\prod_{l=\hat{m}+1}^dP_l(\wt{R}_i)\cap P_l(R_j)\rt)
\\ &=
\lt(\prod_{l=1}^{\hat{m}}P_l(R_j)\rt)
\times
\bigcup_{i=1}^{J}
\lt(\prod_{l=\hat{m}+1}^dP_l(\wt{R}_i)\cap P_l(R_j)\rt).
\end{align*}
Hence, 
\[
\lt|
\bigcup_{i=1}^{J}
\lt(\prod_{l=\hat{m}+1}^dP_l(\wt{R}_i)\cap P_l(R_j)\rt)
\rt|
\ge\frac12
\lt|\prod_{l=\hat{m}+1}^dP_l(R_j)\rt|.
\]
Thanks to the fact that 
$|P_{\hat{m}+1}(R_j)|<|P_{\hat{m}}(R_j)|$,
this implies that 
\[
\lt|\bigcup_{i=1}^{J}\wt{R}_i\cap R\rt|
\ge\frac12|R|,
\]
where $R$ is a unique dyadic rectangle 
containing $x$ and satisfies 
\[
|P_1(R)|
=
|P_2(R)|
=\cdots=
|P_{\hat{m}}(R)|
=
|P_{\hat{m}+1}(R_j)|.
\]
This proves \eqref{2.3}, 
because such $R$ should belong to $\cR_{m-1}$. 

It follows from \eqref{2.3} and our assumption that 
\begin{align*}
w\lt(\bigcup_{i=1}^{M}R_i\rt)
&\le
w\lt(\lt\{
x\in\R^d:\,
\kM_{m-1}[\1_{\bigcup_{i=1}^{N}\wt{R}_i}](x)\ge\frac12
\rt\}\rt)
\\ &\le C
U\lt(\bigcup_{i=1}^{N}\wt{R}_i\rt)
\\ &\le C
\sum_{i=1}^{N}U(\wt{R}_i),
\end{align*}
where 
$U=\kM_{m-1}\kM_{m-2}\cdots\kM_1w$. 
We shall evaluate the quantity: 
\[
\text{(i)}
=
\sum_{i=1}^{N}U(\wt{R}_i).
\]

Set 
$E(\wt{R}_1)=\wt{R}_1$ and, 
for $i=2,3,\ldots,N$, 
\[
E(\wt{R}_i)
=
\wt{R}_i
\setminus
\bigcup_{j=1}^{i-1}\wt{R}_j.
\]
Then we have that 
the sets $E(\wt{R}_i)$ are pairwise disjoint and, 
by \eqref{2.2}, that 
\begin{equation}\label{2.4}
|E(\wt{R}_i)|\ge\frac12|\wt{R}_i|,
\quad i=1,2,\ldots,N.
\end{equation}

It follows from \eqref{2.1} that 
\begin{align*}
\text{(i)}
&\le
\frac1{t^p}
\sum_{i=1}^{N}
U(\wt{R}_i)
\lt(\fint_{\wt{R}_i}|f|\,dy\rt)^p
\\ &=
\frac1{t^p}
\sum_{i=1}^{N}
\lt(
\frac1{|\wt{R}_i|}
\int_{\wt{R}_i}|f|\,dy
\cdot
\lt(\fint_{\wt{R}_i}U\,dy\rt)^{\frac1p}
\rt)^p
|\wt{R}_i|
\\ &\le
\frac2{t^p}
\sum_{i=1}^{N}
\lt(
\fint_{\wt{R}_i}|f|W^{\frac1p}\,dy
\rt)^p
|E(\wt{R}_i)|
\\ &\le
\frac2{t^p}
\int_{\R^d}(\kM_m[fW^{\frac1p}])^p\,dx
\\ &\le
\frac{C}{t^p}
\int_{\R^d}|f|^pW\,dx,
\end{align*}
where we have used \eqref{2.4} 
(the sparseness property of $\{\wt{R}_i\}$) 
and the the $L^p$-boundedness of $\kM_m$. 
Altogether, 
this completes the proof. 

\section{An elementary proof of the endpoint estimate for the strong maximal operator}\label{sec3}
In what follows we shall prove the following theorem, 
which is originally due to in \cite{JMZ}. 

\begin{thm}\label{thm3.1}
Let $c=1,2,\ldots,d$. Then 
\[
|\{x\in\R^d:\,\kM_cf(x)>t\}|
\le C
\int_{\R^d}
\frac{|f|}{t}
\lt(1+\log^{+}\frac{|f|}{t}\rt)^{c-1}
\,dx,\quad t>0,
\]
holds, where the constant $C>0$ does not depend on $f$.
\end{thm}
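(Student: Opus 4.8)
The plan is to run the same linearization-and-selection scheme as in Section~\ref{sec2}, but to replace the inductive reduction of complexity by a single application of the Fefferman--Pipher covering lemma. By homogeneity we may take $t=1$, replacing $f$ by $f/t$; then the left-hand side becomes $|\{\kM_cf>1\}|$ and the right-hand side becomes $\int_{\R^d}|f|(1+\log^{+}|f|)^{c-1}\,dx$. The case $c=1$ is just the weak-$(1,1)$ bound for $\kM_1=\kM$ (here $(1+\log^{+}|f|)^{0}=1$), so I would dispose of it separately and assume $c\ge2$ below. As in Section~\ref{sec2} we may assume the relevant rectangles are dyadic at the cost of a constant depending only on $d$, and by inner regularity it suffices to bound $\lt|\bigcup_{i=1}^{M}R_i\rt|$ for an arbitrary finite family $\{R_i\}_{i=1}^{M}\subset\cR_c$ of dyadic rectangles with $\fint_{R_i}|f|\,dy>1$, uniformly in $M$.

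Next I would invoke the covering lemma of Fefferman and Pipher \cite{FP}: from $\{R_i\}_{i=1}^{M}$ one extracts a subcollection $\{\wt{R}_j\}_{j=1}^{N}$ so that, writing $\Om=\bigcup_{j=1}^{N}\wt{R}_j$ and $g=\sum_{j=1}^{N}\1_{\wt{R}_j}$ for the overlap function, one has both the covering property $\lt|\bigcup_{i=1}^{M}R_i\rt|\le C|\Om|$ and the exponential integrability estimate $\int_{\Om}\exp(\lm\,g^{1/(c-1)})\,dx\le C|\Om|$ for some $\lm>0$. Here the exponent $1/(c-1)$ reflects exactly the $c$-parameter structure of $\cR_c$, in analogy with the $\exp L^{1/(n-1)}$ overlap produced by the classical lemma in $\R^n$. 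Since each selected rectangle satisfies $|\wt{R}_j|<\int_{\wt{R}_j}|f|\,dy$, summing yields
\[
\lt|\bigcup_{i=1}^{M}R_i\rt|
\le C|\Om|
\le C\sum_{j=1}^{N}|\wt{R}_j|
\le C\sum_{j=1}^{N}\int_{\wt{R}_j}|f|\,dy
= C\int_{\Om}|f|\,g\,dx,
\]
so everything reduces to estimating $\int_{\Om}|f|\,g\,dx$.

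Finally I would split this integral by the Young inequality for the complementary pair $\Phi(s)=s(1+\log^{+}s)^{c-1}$ and $\Psi(s)\sim\exp(s^{1/(c-1)})$, used in the scaled form $|f|\,g\le C_\eps\,\Phi(|f|)+\eps\exp(\lm\,g^{1/(c-1)})$ valid for every $\eps>0$. Integrating over $\Om$ and inserting the two properties of the covering lemma gives
\[
\lt|\bigcup_{i=1}^{M}R_i\rt|
\le C_\eps\int_{\R^d}|f|(1+\log^{+}|f|)^{c-1}\,dx
+C\eps|\Om|,
\]
and, since $|\Om|\le\lt|\bigcup_{i=1}^{M}R_i\rt|$ is finite, choosing $\eps$ small enough that $C\eps\le\tfrac12$ lets me absorb the last term into the left-hand side and conclude. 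I expect two points to require the most care: first, verifying or correctly citing the covering lemma with the precise $\exp L^{1/(c-1)}$ control on the overlap for complexity-$c$ rectangles, which is the geometric heart of the matter; and second, the calibration of the constants $\lm$ and $\eps$ in the duality step, where the free parameter in the exponential estimate is exactly what makes the absorption legitimate.
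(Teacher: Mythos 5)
Your reductions (homogeneity, dyadic rectangles, finite subfamilies) and your endgame (scaled Young inequality for the complementary pair $s(1+\log^{+}s)^{c-1}$ / $\exp(s^{1/(c-1)})$, then absorption of the small exponential term) are sound, and they mirror the paper's own closing argument: the inequality $ab\le\phi(a)+b(\log^{+}b)^{m-1}$, the quantity (ii), and the choice $C_0\dl_0=\tfrac12$ play exactly the roles of your $C_\eps$, your exponential integral, and your $\eps$. The genuine gap is the step you yourself flag as needing care: you invoke, as a black box, a covering lemma producing a subfamily $\{\wt{R}_j\}$ of complexity-$c$ rectangles in $\R^d$ with $\lt|\bigcup_iR_i\rt|\le C|\Om|$ and $\int_{\Om}\exp\bigl(\lm g^{1/(c-1)}\bigr)\,dx\le C|\Om|$. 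No citation delivers this in the generality you need. The lemma of \cite{FP} concerns the full $n$-parameter family of rectangles in $\R^n$, i.e.\ the case $c=d$; the rectangles of Theorem \ref{thm3.1} form a $c$-parameter family sitting inside $\R^d$ (there are still $d$ sides, only their lengths are constrained to take at most $c$ values), and the exponent $1/(c-1)$ for that family is precisely what has to be established. Moreover, this covering statement is not a primitive that is ``more elementary'' than the theorem: its known proofs, including the one in \cite{FP}, proceed by induction on the number of parameters and use at each step an endpoint (or Orlicz-type) bound for the lower-parameter maximal operator --- that is, essentially the statement you are trying to prove for $c-1$. So citing the lemma either imports a result that does not exist in the literature in this form, or silently assumes the inductive content that constitutes the actual proof.

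What the paper does instead is prove that covering statement inline, by induction on the complexity, and this is the entire substance of Section \ref{sec3}: the selection criterion \eqref{3.2} is exactly the exponential selection you want; the containment \eqref{3.3} shows that every discarded rectangle lies in a level set of $\kM_{m-1}$ applied to $\exp\bigl(\sum_i\1_{\wt{R}_i}\bigr)^{1/(m-1)}-1$; the inductive hypothesis (Theorem \ref{thm3.1} for $c=m-1$, in weak $L(\log L)^{m-2}$ form) bounds the measure of that level set; and the elementary inequality \eqref{3.4} together with \eqref{3.2} and \eqref{3.5} converts the resulting integral into $C\sum_i|\wt{R}_i|$, which is the analogue of your two covering properties with $\sum_i|\wt{R}_i|$ in place of $|\Om|$. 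From that point on, your argument and the paper's coincide. To repair your proposal, replace the appeal to \cite{FP} by this induction on $c$ (or an equivalent proof of the complexity-$c$ covering lemma); as written, the geometric heart of the theorem is assumed rather than proved.
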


\begin{proof}
First, we notice that the theorem holds for $c=1$. 
We assume that the theorem holds for $c=m-1$ 
and then we shall prove it for $c=m$. 
In the same manner as in the proof of Theorem \ref{thm1.3},
we may assume that 
the basis $\cR_m$ is the set of all dyadic rectangles and, 
if $R\in\cR_m$, then 
the sidelengths $|P_i(R)|$ decrease for $i$ and 
\[
|P_1(R)|
=
|P_2(R)|
=\cdots=
|P_{\hat{m}}(R)|
>
|P_{\hat{m}+1}(R)|.
\]
Fix $t>0$ and 
given the finite collection of dyadic rectangles 
$\{R_i\}_{i=1}^{M}\subset\cR_m$ 
such that 
\begin{equation}\label{3.1}
\fint_{R_i}|f|\,dy>t,\quad
i=1,2,\ldots,M.
\end{equation}
It suffices to estimate 
$\lt|\bigcup_{i=1}^{M}R_i\rt|$.

First relabel if necessary so that 
the $R_i$ are ordered so that 
their long sidelengths $|P_1(R_i)|$ decrease. 
We now give a selection procedure to find subcollection 
$\{\wt{R}_i\}_{i=1}^{N}
\subset
\{R_i\}_{i=1}^{M}$. 

Take $\wt{R}_1=R_1$. 
Suppose have now chosen the rectangles 
$\wt{R}_1,\wt{R}_2,\ldots,\wt{R}_{i-1}$. 
We select $\wt{R}_i$ to be the first rectangle $R_k$ 
occurring after $\wt{R}_{i-1}$ 
so that 
\[
\int_{R_k}
\exp\lt(\sum_{j=1}^{i-1}\1_{\wt{R}_j}\rt)^{\frac1{m-1}}
-1
\,dy
<e|R_k|.
\]
Thus, we see that 
\begin{equation}\label{3.2}
\int_{\wt{R}_i}
\exp\lt(\sum_{j=1}^{i-1}\1_{\wt{R}_j}\rt)^{\frac1{m-1}}
-1
\,dy
<e|\wt{R}_i|,
\quad i=2,3,\ldots,N.
\end{equation}

We claim that 
\begin{equation}\label{3.3}
\bigcup_{i=1}^{M}R_i
\subset
\lt\{
x\in\R^d:\,
\kM_{m-1}\lt[
\exp\lt(\sum_{i=1}^{N}\1_{\wt{R}_i}\rt)^{\frac1{m-1}}
-1
\rt](x)\ge e
\rt\}.
\end{equation}
Indeed, 
if $x$ belongs to some $\wt{R}_i$, 
it is of course obvious that it is contained on the set to the right, 
since then we can replace $\wt{R}_i$ by 
the rectangle $R$ which contains $x$, 
whose first $\hat{m}$ sidelengths are equals to 
the $(\hat{m}+1)$th sidelength of $\wt{R}_i$ 
and the other side lengths are remained. 
Choose any point $x$ inside a rectangle $R_j$ 
that is not one of the selected rectangles $\wt{R}_i$. 
Then, 
there exists a unique $J\le N$ such that 
\[
\int_{R_j}
\exp\lt(\sum_{i=1}^{J}\1_{\wt{R}_i}\rt)^{\frac1{m-1}}
-1
\,dy
\ge e|R_j|.
\]
Since, 
$|P_l(\wt{R}_i)|\ge|P_l(R_j)|$ 
for $l=1,2,\ldots,\hat{m}$ 
and $i=1,2,\ldots,J$, 
we have that 
\[
P_l(\wt{R}_i)\cap P_l(R_j)
=
P_l(R_j)
\text{ when }
\wt{R}_i\cap R_j\ne\0.
\]
It follows from Fubini's theorem that 
\begin{align*}
\lefteqn{
\int_{R_j}
\exp\lt(\sum_{i=1}^{J}\1_{\wt{R}_i}\rt)^{\frac1{m-1}}
-1
\,dy_1dy_2\cdots dy_d
}\\ &=
|P_1(R_j)|^{\hat{m}}
\int_{R_j'}
\exp\lt(\sum_{i=1}^{J}\1_{\wt{R}_i'}\rt)^{\frac1{m-1}}
-1
\,dy_{\hat{m}+1}dy_{\hat{m}+2}\cdots dy_d,
\end{align*}
where, for $R\in\cR_m$, 
\[
R'
=
\prod_{l=\hat{m}+1}^dP_l(R).
\]
Thus,
\[
\int_{R_j'}
\exp\lt(\sum_{i=1}^{J}\1_{\wt{R}_i'}\rt)^{\frac1{m-1}}
-1
\,dy_{\hat{m}+1}dy_{\hat{m}+2}\cdots dy_d
\ge e|R_j'|.
\]
Thanks to the fact that 
$|P_{\hat{m}+1}(R_j)|<|P_{\hat{m}}(R_j)|$, 
this implies that 
\[
\int_{R}
\exp\lt(\sum_{i=1}^{J}\1_{\wt{R}_i}\rt)^{\frac1{m-1}}
-1
\,dy
\ge e|R|,
\]
where $R$ is a unique dyadic rectangle 
containing $x$ and satisfies 
\[
|P_1(R)|
=
|P_2(R)|
=\cdots=
|P_{\hat{m}}(R)|
=
|P_{\hat{m}+1}(R_j)|.
\]
This proves \eqref{3.3}. 

It follows from \eqref{3.3} and our assumption that 
\begin{align*}
\lt|\bigcup_{i=1}^{M}R_i\rt|
&\le
\lt|\lt\{
x\in\R^d:\,
\kM_{m-1}\lt[
\exp\lt(\sum_{i=1}^{N}\1_{\wt{R}_i}\rt)^{\frac1{m-1}}
-1
\rt](x)\ge e
\rt\}\rt|
\\ &\le C
\int_{\R^d}
\lt(\exp\lt(\sum_{i=1}^{N}\1_{\wt{R}_i}\rt)^{\frac1{m-1}}-1\rt)
\lt(\sum_{i=1}^{N}\1_{\wt{R}_i}\rt)^{\frac{m-2}{m-1}}
\,dx
\\ &=C
\sum_{k=1}^{\8}
\frac1{k!}
\int_{\R^d}
\lt(\sum_{i=1}^{N}\1_{\wt{R}_i}\rt)^{\frac{k+m-2}{m-1}}
\,dx.
\end{align*}
We use an elementary inequality:
\begin{equation}\label{3.4}
\lt(\sum_{i=1}^{\8}a_i\rt)^s
\le s
\sum_{i=1}^{\8}
a_i
\lt(\sum_{j=1}^ia_j\rt)^{s-1},
\quad s>1,
\end{equation}
where $\{a_i\}_{i=1}^{\8}$ 
is a sequence of summable nonnegative reals. 
Then, for $k>1$, 
\begin{align*}
\lefteqn{
\int_{\R^d}
\lt(\sum_{i=1}^{N}\1_{\wt{R}_i}\rt)^{\frac{k+m-2}{m-1}}
\,dx
}\\ &\le
\frac{k+m-2}{m-1}
\sum_{i=1}^{N}
\int_{\wt{R}_i}
\lt(\sum_{j=1}^i\1_{\wt{R}_j}\rt)^{\frac{k-1}{m-1}}
\,dx
\\ &\le k
\sum_{i=1}^{N}
\int_{\wt{R}_i}
\lt(\sum_{j=1}^i\1_{\wt{R}_j}\rt)^{\frac{k-1}{m-1}}
\,dx.
\end{align*}
Inserting this estimate and changing the order of sums, 
we obtain 
\begin{align*}
\lt|\bigcup_{i=1}^{M}R_i\rt|
&\le C
\int_{\R^d}
\lt(\sum_{i=1}^{N}\1_{\wt{R}_i}\rt)
\,dx+C
\sum_{i=1}^{N}
\int_{\wt{R}_i}
\exp\lt(\sum_{j=1}^i\1_{\wt{R}_j}\rt)^{\frac1{m-1}}
-1
\,dx
\\ &\le C
\sum_{i=1}^{N}|\wt{R}_i|,
\end{align*}
where we have used \eqref{3.2} and 
\begin{align}\label{3.5}
\lefteqn{
\int_{\wt{R}_i}
\exp\lt(\sum_{j=1}^i\1_{\wt{R}_j}\rt)^{\frac1{m-1}}
-1
\,dx
}\\ \nonumber &\le
\int_{\wt{R}_i}
\exp\lt[\lt(\sum_{j=1}^{i-1}\1_{\wt{R}_j}\rt)^{\frac1{m-1}}+1\rt]
-1
\,dx
\\ \nonumber &\le e
\int_{\wt{R}_i}
\exp\lt(\sum_{j=1}^{i-1}\1_{\wt{R}_j}\rt)^{\frac1{m-1}}
-1
\,dx
+
(e-1)|\wt{R}_i|.
\end{align}
We shall evaluate the quantity: 
\[
\text{(i)}
=
\sum_{i=1}^{N}|\wt{R}_i|.
\]

By \eqref{3.1} we have that 
\begin{align*}
\text{(i)}
&\le
\sum_{i=1}^{N}
\int_{\wt{R}_i}\frac{|f|}{t}\,dy
\\ &=
\int_{\R^d}
\lt(\sum_{i=1}^{N}\1_{\wt{R}_i}\rt)
\cdot
\frac{|f|}{t}
\,dx.
\end{align*}

We now employ the following inequality: 
For $a\ge 0$, let 
\[
\phi(a)
=
\int_0^a\exp s^{\frac1{m-1}}\,ds.
\]
Then we easily see that, 
by noticing $\phi(a)>a$, 
\[
ab\le\phi(a)+b(\log^{+}b)^{m-1},
\quad a,b>0.
\]

Choosing $\dl_0$ small enough determined later, 
we obtain 
\begin{align*}
\text{(i)}
&\le\dl_0
\int_{\R^d}
\phi\lt(\sum_{i=1}^{N}\1_{\wt{R}_i}\rt)
\,dx
\\ &\quad+
\int_{\R^d}
\frac{|f|}{t}
\lt(1+\log^{+}\frac{|f|}{\dl_0t}\rt)^{m-1}
\,dx.
\end{align*}
We have to evaluate the quantity: 
\[
\text{(ii)}
=
\int_{\R^d}
\phi\lt(\sum_{i=1}^{N}\1_{\wt{R}_i}\rt)
\,dx.
\]

There holds 
\begin{align*}
\phi(a)
&=
\int_0^a\exp s^{\frac1{m-1}}\,ds
\\ &=
\sum_{k=0}^{\8}
\frac{1}{k!}
\int_0^a s^{\frac{k}{m-1}}\,ds
\\ &=
a+\sum_{k=1}^{\8}
\frac{m-1}{(k+m-1)k!}
a^{\frac{k+m-1}{m-1}},
\end{align*}
which entails 
\[
\text{(ii)}
=
\int_{\R^d}
\lt(\sum_{i=1}^{N}\1_{\wt{R}_i}\rt)
\,dx
+
\sum_{k=1}^{\8}
\frac{m-1}{(k+m-1)k!}
\int_{\R^d}
\lt(\sum_{i=1}^{N}\1_{\wt{R}_i}\rt)^{\frac{k+m-1}{m-1}}
\,dx.
\]
It follows from \eqref{3.4} that, 
for $k>0$, 
\begin{align*}
\lefteqn{
\frac{m-1}{(k+m-1)k!}
\int_{\R^d}
\lt(\sum_{i=1}^{N}\1_{\wt{R}_i}\rt)^{\frac{k+m-1}{m-1}}
\,dx
}\\ &\le
\frac1{k!}
\sum_{i=1}^{N}
\int_{\wt{R}_i}
\lt(\sum_{j=1}^i\1_{\wt{R}_j}\rt)^{\frac{k}{m-1}}
\,dx.
\end{align*}
Inserting this estimate and changing the order of sums, 
we obtain 
\begin{align*}
\text{(ii)}
&\le 
\int_{\R^d}
\lt(\sum_{i=1}^{N}\1_{\wt{R}_i}\rt)
\,dx
+
\sum_{i=1}^{N}
\int_{\wt{R}_i}
\exp\lt(\sum_{j=1}^i\1_{\wt{R}_j}\rt)^{\frac1{m-1}}
-1
\,dx
\\ &\le C_0
\text{(i)},
\end{align*}
where we have used \eqref{3.2} and \eqref{3.5}. 

If we choose $\dl_0$ so that 
$C_0\dl_0=\frac12$,
we obtain 
\[
\text{(i)}
\le C
\int_{\R^d}
\frac{|f|}{t}
\lt(1+\log^{+}\frac{|f|}{t}\rt)^{m-1}
\,dx.
\]
This completes the proof. 
\end{proof}

\end{document}